\begin{document}

\title{Tracking Controller Design for Satellite Attitude Under Unknown Constant Disturbance Using  Stable Embedding
}

\titlerunning{Tracking Controller Design for Satellite Attitude}        

\author{
Wonshick Ko  \and
 Karmvir Singh Phogat  \and
  Nicolas Petit  \and
   Dong Eui Chang
}


\institute{
Wonshick Ko \at
               Hanon Systems,
              95, Sinilseo-ro, Daedeok-gu, Daejeon, Korea \\
              Tel.: +82-42-350-7540\\
              Fax: +82-42-350-3410\\
              \email{wko1@hanonsystems.com}           
           \and
           Karmvir Singh Phogat \at
              Department of Mathematical Engineering and Information Physics, University of Tokyo, 7 Chome-3-1 Hongo, Bunkyo City, Tokyo 113-8654, Japan \\
              \email{karmvir.p@gmail.com} 
            \and
               Nicolas Petit \at
               CAS, MINES ParisTech, Paris, France\\
               Tel.: +33-1-6469-4863\\
               Fax: +33-1-4051-9165\\
               \email{nicolas.petit@mines-paristech.fr}  
             \and
             Dong Eui Chang \at
             School of Electrical Engineering, Korea Advanced Institute of Science and Technology, 291 Daehak-ro, Daejeon, Korea\\
               Tel.: +82-42-350-7440\\
              Fax: +82-42-350-3410\\
              \email{dechang@kaist.ac.kr}  
}

\date{Received: date / Accepted: date}

\maketitle

\begin{abstract}
We propose a tracking control law for the fully actuated rigid body  system in the presence of any unknown constant disturbance by employing quaternions with the stable embedding technique and Lyapunov stability theory. The stable embedding technique extends the attitude dynamics from the set of unit quaternions to the set of quaternions, which is a Euclidean space, such that the set of unit quaternions is an invariant set of the extended dynamics. Such a stable extension of the system dynamics to a Euclidean space allows us to employ well studied Lyapunov techniques in Euclidean spaces such as LaSalle-Yoshizawa's theorem. A robust tracking control law is proposed for the attitude dynamics subject to  unknown constant disturbance and the convergence properties of the tracking control law is rigorously proven. It is demonstrated with the help of numerical simulations that the proposed control law has a remarkable performance even in some challenging situations.
\keywords{Attitude tracking control \and Satellite \and  Embedding \and Lyapunov function \and Quaternions}
\end{abstract}
\section{Introduction}
The attitude dynamics and the control of a rigid body encounter the unique challenge that the configuration space of attitudes cannot be globally identified with a Euclidean space  \cite{LeChEu19}. More specifically, the attitude representations such as the three-dimensional special orthogonal group ${\rm SO}(3)$ that is composed of $3\times 3$ orthogonal matrices with the determinant of one \cite{Le12,LeChEu19} or the set of unit quaternions ${\rm S}^3$ encounter the same challenge that they cannot be globally identified with a Euclidean space. Therefore, the controller design and the stability analysis of systems on manifolds require sophisticated differential geometric tools which are often difficult to comprehend for ordinary engineers. An alternative to such cumbersome approaches is to stably embed the system dynamics on manifolds into Euclidean spaces \cite{Ch18IJRNC} and then design controllers in these ambient Euclidean spaces. Moreover, the controller designed on the ambient space has a global representation in contrast to a local chartwise representation \cite{ChJiPe16,Ch18IJRNC,KiPhCh19}.

In this article, we employ the stable embedding technique to design a robust tracking control law for a rigid body system under the influence of unknown constant disturbance. To this end, we first stably embed the system dynamics with the configuration space ${\rm S}^3$ into the set of quaternions $\mathbb H$ which is globally identified with $\mathbb R^4$. The configuration manifold ${\rm S}^3$ becomes a local attractor of the extended dynamics that is defined on $\mathbb H$, and the extended dynamics is identical to the attitude dynamics on the configuration manifold ${\rm S}^3$. Second, a robust tracking control law is then designed for the extended system in the Euclidean space $\mathbb H$ using standard control design tools such as Lyapunov functions and LaSalle-Yoshizawa's theorem. For example, the geometric approaches may be discouraging sometimes as we cannot add and subtract two quaternions in $ {\rm S}^3$ which are perfectly valid operations for quaternions in $\mathbb H$. Therefore, the controller designing in $ \mathbb H$ and its stability analysis will be simplified to a large extent. 

Let us review relevant previous work. A robust global attitude stabilizing, not tracking, control law is proposed using unit quaternion feedback in \cite{JoKeWe95} that is robust with respect to uncertainty in system parameters. A quaternion-based hybrid control law for robust global attitude tracking is proposed in \cite{MaSaTe11} that is robust with respect to the angular velocity measurement bias. This control law is refined further  in \cite{XiSu18} such that a saturated output feedback control law is proposed  for global asymptotic attitude tracking of spacecraft subject to actuator constraints and attitude measurements. However, these existing control laws do not account for the disturbance at the designing stage. On the contrary, by employing the geometric concepts, a hybrid robust exponential attitude tracking control law is designed on ${\rm SO}(3)$ in \cite{Le15} which considers a constant unknown disturbance with a known bound at the designing stage. The technique used in estimating the unknown disturbance for designing the adaptive control law in \cite{Le15} is a guiding principle for the proposed robust control law. A reference shifting technique in combination with a geometric method is employed in \cite{LeChEu19} to achieve semi-global tracking on ${\rm SO}(3)$. These geometric techniques employ sophisticated tools from differential geometry that makes controller designing far more tortuous as compared to the proposed technique. The key contributions of the article are: a) A technique of robust controller design on unit quaternions is presented by stably embedding the unit quaternions to Euclidean spaces, and  b)  A robust tracking controller is designed for the rigid body attitude control system that is subject to an unknown constant disturbance.   In this paper, we deal with the fully actuated rigid body system, and we plan to apply the embedding technique in the future to the case of magnetic actuation which has only two degrees of actuation in a special way; Refer to \cite{LoAs04,XiSu18} for more on the magnetic actuation case.

This paper is organized as follows: A robust tracking controller is designed for the rigid body attitude control system by stable embedding the system dynamics into Euclidean spaces in Sec. \ref{sec:main}. The proposed controller performance is demonstrated with numerical experiments in Sec. \ref{sec:numerical}. The concluding remarks and future scope are presented in Sec. \ref{sec:conclusion}.

\section{Main Results}\label{sec:main}
We briefly review quaternions. Quaternions are represented in the form:  $q = a + b\mathbf i + c\mathbf j + d\mathbf k$, where $a$, $b$, $c$, and $d$ are real numbers, and $\mathbf i$ and $\mathbf j$ and $\mathbf k$ are fundamental quaternion units satisfying $-1 = \mathbf i^2 = \mathbf j^2 = \mathbf k^2$ and $\mathbf i = \mathbf j \mathbf k = - \mathbf k \mathbf j$. The scalar part, $a$, of the quaternion is denoted by $q_s$, and the vector part, $b\mathbf i + c\mathbf j + d\mathbf k$, of the quaternion is denoted by $q_v$. Quaternions with $q_s = 0$ are called pure quaternions. The pure quaternion $b\mathbf i + c\mathbf j + d\mathbf k$ of $q$ is conveniently identified with a vector $(b,c,d)\in \mathbb R^3$ and vice versa. Therefore, with a slight abuse of notation, we denote the vector $(b,c,d) \in \mathbb R^3$ and the corresponding pure quaternion $b\mathbf i + c\mathbf j + d\mathbf k$ as $q_v$; however, the notation is be clearly understood from the context. The operator $[\cdot]_s$ selects the scalar part and $[\cdot]_v$ selects the vector part of a given quaternion, i.e. $[q]_s = q_s$ and $[q]_v= q_v$. The set of all quaternions is denoted $\mathbb H$. The product $qp$ of two quaternions $q = q_s + q_v, p = p_s + p_v\in \mathbb H$ can be compactly expressed as
\[
qp = (q_s p_s - \langle q_v, p_v\rangle) + (q_s p_v + p_s q_v + q_v \times p_v),
\]
where $\langle\, , \rangle$ and $\times$ are the dot product and the cross product on $\mathbb R^3$. Note that the set $\mathbb H$ is a vector space that is endowed with quaternion multiplication. Some papers use the nonstandard symbol $\odot$ or $\otimes$ to denote quaternion multiplication, but we do not adopt the notation here.   The conjugation $q \mapsto q^*$ is defined by $q^* = q_s - q_v$, i.e. $(a + b\mathbf i + c\mathbf j + d\mathbf k)^* = a - b\mathbf i - c\mathbf j - d\mathbf k$, and it satisfies $(qp)^* = p^*q^*$ for $q,p \in \mathbb H$.  If $\Omega$ is a pure quaternion, then $q^*\Omega q$ is also a pure quaternion for all $q\in \mathbb H$. The norm of $q =a + b\mathbf i + c\mathbf j + d\mathbf k \in \mathbb H$ is defined by $|q| = \sqrt{a^2 + b^2 + c^2 + d^2}$. It is easy to show $|q|^2 = q^*q = qq^*$.    Let $\text{S}^3 = \{ q\in \mathbb H \mid |q| = 1\}$ denote the set of unit quaternions, and $\mathbb H_0 $ denote the set $\{ q \in \mathbb H \mid q \neq 0 \}$. 

\subsection{Tracking Control of a Rigid Body}
The equations of rotational motion of a rigid body are given by
\begin{subequations}\label{original:dyn:quat}
\begin{align}
\dot q &= \frac{1}{2}q\Omega \label{original:dyn:quat:1},\\
\dot \Omega & = {\mathbb I}^{-1} ( ({\mathbb I } \Omega) \times \Omega)) +  {\mathbb I}^{-1}\tau,
\end{align} 
\end{subequations}
where $q\in {\rm S}^3$ is the attitude of the rigid body, $\Omega \in \mathbb R^3$ is the body-fixed angular velocity, $\tau \in \mathbb R^3$ is control vector and $\mathbb I \in \mathbb R^{3\times 3}$ is the moment-of-inertia matrix of the rigid body. It is worth noting that  $\Omega \in \mathbb R^3$ is identified with a pure quaternion in \eqref{original:dyn:quat:1}, and therefore, $q\Omega$ in \eqref{original:dyn:quat:1} is the quaternion multiplication.
We now extend the system dynamics \eqref{original:dyn:quat}, using the stable embedding technique \cite{Ch18IJRNC} from ${\rm S}^3 \times \mathbb R^3$ to $\mathbb H \times \mathbb R^3$ as 
\begin{subequations}\label{ext:dyn:quat}
\begin{align}
\dot q &= \frac{1}{2}q\Omega - \alpha (|q|^2-1) q,\\
\dot \Omega & = {\mathbb I}^{-1} ( ({\mathbb I } \Omega) \times \Omega) )+  {\mathbb I}^{-1}\tau
\end{align} 
\end{subequations}
with $\alpha>0$.
\begin{lemma}\label{lemma:pos:inv:H0}
The dynamics \eqref{ext:dyn:quat} reduces to \eqref{original:dyn:quat} on ${\rm S}^3 \times \mathbb R^3$. Moreover,  ${\rm S}^3 \times \mathbb R^3$ is an invariant set of \eqref{ext:dyn:quat}, and a local exponential attractor of \eqref{ext:dyn:quat} with the region of convergence $\mathbb H_0 \times \mathbb R^3$.

\begin{proof}
As we know that $|q| = 1$ for each $q \in \text{S}^3$, the system dynamics \eqref{ext:dyn:quat} reduces to \eqref{original:dyn:quat} on $\text{S}^3 \times \mathbb R^3$. Let us define a Lyapunov-like function 
 \[
\mathbb H \ni q \mapsto V(q) = |q|^2 = qq^* \geq 0 \in \mathbb R
\]
such that $\text{S}^{3} = \{q \in \mathbb H \mid V(q) = 1 \}.$ Along the trajectories of 
 \eqref{ext:dyn:quat}, 
 \begin{align*}
 \frac{d}{dt}V &= \dot q q^* + q \dot q^*  \\
 &= \left ( \frac{1}{2}q\Omega - \alpha (|q|^2-1) q\right ) q^*+ q \left ( \frac{1}{2}q\Omega - \alpha (|q|^2-1) q\right )^*\\
 &=  \frac{1}{2}q\Omega q^*- \alpha (|q|^2-1) qq^*+  \frac{1}{2}q\Omega^* q^*- \alpha (|q|^2-1) qq^*\\
 &= -2\alpha (|q|^2-1)|q|^2\\
 &= -2\alpha (V-1)V
 \end{align*}
which  is a nonlinear first-order ODE in $V$ with $V=1$ being an exponentially stable equilibrium point  with the region of convergence $\{V \in \mathbb R \mid V > 0 \}$. Notice that the set $V=1$ corresponds to ${\rm S}^3$ and the set $V>0$ corresponds to $\mathbb H_0$. Hence, ${\rm S}^3 \times \mathbb R^3$ is an invariant set of \eqref{ext:dyn:quat}, and a local exponential attractor of \eqref{ext:dyn:quat} with $\mathbb H_0 \times \mathbb R^3$ as the region of convergence. 
\end{proof}
\end{lemma}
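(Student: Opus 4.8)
The plan is to dispatch the three assertions in turn, all through a single scalar auxiliary function. The reduction claim is immediate: on ${\rm S}^3 \times \mathbb{R}^3$ one has $|q| = 1$, so the extra term $-\alpha(|q|^2-1)q$ in \eqref{ext:dyn:quat} vanishes identically and the vector field coincides with that of \eqref{original:dyn:quat}.

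For the invariance and attractor properties I would introduce the smooth nonnegative function $V(q) = |q|^2 = qq^*$ on $\mathbb{H}$, whose level set $\{V = 1\}$ is precisely ${\rm S}^3$. Differentiating along \eqref{ext:dyn:quat} via $\dot V = \dot q\, q^* + q\, \dot q^*$ and substituting $\dot q = \tfrac12 q\Omega - \alpha(V-1)q$, the key simplification is that $\Omega$ is a pure quaternion, so $\Omega^* = -\Omega$ and hence $q\Omega q^* + q\Omega^* q^* = q(\Omega + \Omega^*)q^* = 0$; the surviving terms collapse to the self-contained scalar ODE $\dot V = -2\alpha(V-1)V$. I would stress that this ODE does not involve $\Omega$, so the $\mathbb{R}^3$ factor is irrelevant to the analysis of $V$ and simply comes along for the ride.

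It then remains to run a one-dimensional phase-line argument for $\dot V = -2\alpha(V-1)V$ on $(0,\infty)$. Both $V=0$ and $V=1$ are equilibria; the solution $V \equiv 1$ yields the invariance of ${\rm S}^3 \times \mathbb{R}^3$. Linearizing at $V=1$ (writing $W = V-1$, $\dot W = -2\alpha W - 2\alpha W^2$) shows $V=1$ is locally exponentially stable. For convergence from an arbitrary $V(0)>0$: on $(0,1)$ we have $\dot V > 0$ with $V$ bounded above by $1$, and on $(1,\infty)$ we have $\dot V < 0$ with $V$ bounded below by $1$, so in either case the solution remains in $(0,\infty)$ for all $t \geq 0$ and monotonically approaches $1$. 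Hence every trajectory with $q(0) \in \mathbb{H}_0$ satisfies $V(t) \to 1$, i.e. $q(t) \to {\rm S}^3$, which establishes that ${\rm S}^3 \times \mathbb{R}^3$ is a local exponential attractor with region of convergence $\mathbb{H}_0 \times \mathbb{R}^3$.

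The only delicate point is the algebra in the $\dot V$ computation, and the one fact that must not be skipped is the purity of $\Omega$, i.e. $\Omega + \Omega^* = 0$: this is exactly what annihilates the rotational term and leaves a clean decoupled scalar equation. Everything after that reduction is elementary.
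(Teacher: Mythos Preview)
Your proposal is correct and follows essentially the same approach as the paper: both introduce $V(q)=|q|^2$, compute $\dot V = -2\alpha(V-1)V$ along the extended dynamics, and read off the invariance and exponential attraction of $\{V=1\}$ with region of convergence $\{V>0\}$. Your write-up is in fact slightly more explicit than the paper's in two places---you spell out that $\Omega^* = -\Omega$ is what kills the rotational contribution, and you give the monotone phase-line argument rather than just asserting the stability properties of the scalar ODE---but the underlying argument is identical.
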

The control objective is to design a tracking control law for the system dynamics \eqref{ext:dyn:quat} to track a pre-defined reference trajectory. To this end, let us consider a reference trajectory $(q_0(t), \Omega_0(t)) \in \text{S}^3 \times \mathbb R^3$ with $t\geq 0$ that satisfies \eqref{original:dyn:quat:1}. Then the corresponding tracking error is defined by 
\begin{equation}\label{def:tracking:error}
e_q = q_0^*q - 1, \quad e_\Omega = \Omega - \Omega_0,
\end{equation}
which satisfy the following error dynamics:
\begin{subequations}\label{error:dyn:quat}
\begin{align}
\dot e_q &= \frac{1}{2} (e_q \Omega_0 - \Omega_0 e_q)+ \frac{1}{2}(1+e_q)e_\Omega  - \alpha (|q|^2 - 1)(1+e_q), \label{eq:dot}\\ 
\dot e_\Omega &= {\mathbb I}^{-1} ( ({\mathbb I } \Omega) \times \Omega) )+  {\mathbb I}^{-1}\tau - \dot\Omega_0. \label{error:dyn:quat:2}
\end{align} 
\end{subequations}
Note that, in the subsequent discussion, we write $e_q = e_{q,s} + e_{q,v}$, where $e_{q,s} = [e_q]_s$ and $e_{q,v} = [e_q]_v$. It is evident from the tracking error \eqref{def:tracking:error} that $|q|^2 = |q_0^*q|^2 = |1+e_q|^2 = 1 + (e_q + e_q^*) + |e_q|^2$, and simplifies to the following identity
\begin{align}\label{eq:Identity}
 |q|^2 = 1 + 2e_{q,s} + (e_{q,s})^2 + |e_{q,v}|^2.
\end{align}
Before we proceed with the design of a feedback control law for tracking, let us prove the following auxiliary result:
\begin{lemma}\label{lemma:V0:dot}
If $V_0(e_q) = \frac{1}{2}|e_q|^2$, then along the trajectory of  \eqref{error:dyn:quat},
\begin{align}\label{dot:V0:quat}
\frac{d}{dt} V_0(e_q) &= -\alpha (2 + e_{q,s} + (|q|^2 -1) )(e_{q,s})^2 \nonumber \\ &+ \frac{1}{2}\langle e_{q,v}, e_\Omega - 2\alpha (e_{q,s} + |q|^2 - 1)e_{q,v} \rangle.
\end{align}

\begin{proof}
By differentiating $V_0(e_q(t)) = \frac{1}{2}e_q(t)^*e_q(t)$ and substituting the error dynamics \eqref{error:dyn:quat}, we get 
\begin{align}
\frac{d}{dt}V_0 &= \frac{1}{2}(\dot e_q^* e_q + e_q^*\dot e_q ) \nonumber \\
&= [e_q^*\dot e_q]_s \nonumber\\
&= \left [\frac{1}{2}(|e_q|^2\Omega_0 - e_q^*\Omega_0e_q)\right ]_s  \nonumber \\
&\quad+ \left [ \frac{1}{2} (e_q^* + |e_q|^2)e_\Omega -\alpha (|q|^2 -1)(e_q^* + |e_q|^2)\right ]_s  \nonumber\\
&= \frac{1}{2} [e_q^*e_\Omega ]_s -\alpha (|q|^2 -1)e_{q,s} -\alpha (|q|^2 -1) |e_q|^2  \nonumber \\
& = \frac{1}{2}\langle e_{q,v}, e_\Omega\rangle -\alpha (|q|^2 -1) (|e_{q}|^2 + e_{q,s}). \label{dot:V0:quat:int}
\end{align}
Employing the identity \eqref{eq:Identity} translates \eqref{dot:V0:quat:int} to \eqref{dot:V0:quat}. That proves the assertion. 
\end{proof}
\end{lemma}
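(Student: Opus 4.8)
\noindent\emph{Proof proposal.} The plan is to differentiate $V_0(e_q)=\tfrac12 e_q^* e_q$ along \eqref{error:dyn:quat} and reduce the whole computation to extracting one scalar part. First I would use that $p+p^*=2[p]_s$ for every $p\in\mathbb H$, so that
\[
\frac{d}{dt}V_0=\tfrac12\big(\dot e_q^* e_q+e_q^*\dot e_q\big)=\tfrac12\big((e_q^*\dot e_q)^*+e_q^*\dot e_q\big)=[e_q^*\dot e_q]_s ;
\]
hence it suffices to compute the scalar part of the single quaternion $e_q^*\dot e_q$ with $\dot e_q$ given by \eqref{eq:dot}.

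Next I would substitute \eqref{eq:dot} and treat the three resulting groups of terms separately after left-multiplication by $e_q^*$. For $\tfrac12 e_q^*(e_q\Omega_0-\Omega_0 e_q)=\tfrac12(|e_q|^2\Omega_0-e_q^*\Omega_0 e_q)$, both pieces are pure quaternions — $|e_q|^2\Omega_0$ because $\Omega_0$ is pure, and $e_q^*\Omega_0 e_q$ by the property recalled in the preliminaries that $q^*\Omega q$ is pure whenever $\Omega$ is — so this group contributes nothing to $[\,\cdot\,]_s$. For $\tfrac12 e_q^*(1+e_q)e_\Omega=\tfrac12(e_q^*+|e_q|^2)e_\Omega$, purity of $e_\Omega$ kills the $|e_q|^2 e_\Omega$ piece, and the quaternion product formula gives $[e_q^* e_\Omega]_s=\langle e_{q,v},e_\Omega\rangle$, so the contribution is $\tfrac12\langle e_{q,v},e_\Omega\rangle$. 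For $-\alpha(|q|^2-1)e_q^*(1+e_q)=-\alpha(|q|^2-1)(e_q^*+|e_q|^2)$ the scalar part is $-\alpha(|q|^2-1)(e_{q,s}+|e_q|^2)$. Collecting the three contributions yields the intermediate expression $\tfrac{d}{dt}V_0=\tfrac12\langle e_{q,v},e_\Omega\rangle-\alpha(|q|^2-1)(|e_q|^2+e_{q,s})$, which is precisely \eqref{dot:V0:quat:int}.

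Finally I would invoke the identity \eqref{eq:Identity}, namely $|e_q|^2=(e_{q,s})^2+|e_{q,v}|^2$ together with $|q|^2-1=2e_{q,s}+(e_{q,s})^2+|e_{q,v}|^2$, and split $-\alpha(|q|^2-1)(|e_q|^2+e_{q,s})$ into a part carrying the factor $(e_{q,s})^2$ and a part carrying $|e_{q,v}|^2$; absorbing the first into $-\alpha(2+e_{q,s}+(|q|^2-1))(e_{q,s})^2$ and the second into the inner product as the term $-2\alpha(e_{q,s}+|q|^2-1)e_{q,v}$ reproduces \eqref{dot:V0:quat}. I do not anticipate a genuine obstacle here; the only delicate points are the bookkeeping of which quaternion products are pure — so that their scalar parts vanish — and carrying out the final regrouping after substituting \eqref{eq:Identity} without sign errors.
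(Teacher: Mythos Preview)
Your proposal is correct and follows essentially the same approach as the paper's proof: reduce $\dot V_0$ to $[e_q^*\dot e_q]_s$, substitute \eqref{eq:dot}, observe that the $\Omega_0$-terms and the $|e_q|^2 e_\Omega$ term are pure so their scalar parts vanish, arrive at the intermediate expression \eqref{dot:V0:quat:int}, and then regroup via \eqref{eq:Identity}. If anything, you spell out more explicitly than the paper why each discarded term is pure and how the final regrouping splits into the $(e_{q,s})^2$ and $|e_{q,v}|^2$ contributions.
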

Let
\begin{equation}\label{def:eta:quat}
\eta =-k_q e_{q,v} + 2\alpha (e_{q,s} + |q|^2 - 1)e_{q,v},
\end{equation}
and 
\begin{align}\label{def:eta:dot:quat}
\dot \eta &= -k_q \dot e_{q,v} + 2\alpha (e_{q,s} + |q|^2 - 1)\dot e_{q,v}  \nonumber \\ &\quad\quad+  2\alpha (\dot e_{q,s} -2\alpha(|q|^2-1)|q|^2)e_{q,v},  
\end{align}
where  $ \dot e_{q,s}$ and $ \dot e_{q,v}$  denote the scalar part and the vector part of the right side of \eqref{eq:dot}, respectively.

\begin{theorem} \label{main:thm:quat}
Let
\begin{equation}\label{def:V:quat}
V_{k_1}(e_q,e_\Omega) =  \frac{1}{2}|e_q|^2 + \frac{1}{4k_1} |e_\Omega -\eta|^2
\end{equation}
with $k_1>0$, where $\eta$ is defined in \eqref{def:eta:quat} with $k_q>0$.
Take any two numbers $c$ and $\epsilon$ such that $0<c<2$ and $0\leq\epsilon<\min\{2-\sqrt{2c},1\}$, and let 
\begin{align}\label{def:S:ep:b:k}
S_{\epsilon,c,k_1}&=  \{ (q,\Omega, e_q,e_\Omega) \in \mathbb H \times \mathbb R^3 \times \mathbb H \times \mathbb R^3 \mid  \nonumber \\ &\qquad\qquad | |q|^2-1| \leq \epsilon, V_{k_1}(e_q,e_\Omega) \leq c \}.
\end{align}
 Then, the feedback 
\begin{equation}\label{control:u:quat}
\tau =  - ({\mathbb I } \Omega) \times \Omega+ \mathbb I ( - k_1e_{q,v}-k_\Omega (e_\Omega - \eta) + \dot \eta + \dot \Omega_0)
\end{equation}
with any  $k_\Omega>0$
exponentially stabilizes the error dynamics \eqref{error:dyn:quat} to zero, where the gain $k_1$ in \eqref{control:u:quat} is the same parameter as that used in \eqref{def:V:quat}. Moreover,  the set $S_{\epsilon,c,k_1}$ is a positively invariant region of convergence for the error dynamics \eqref{error:dyn:quat} in the sense that if a trajectory begins in $S_{\epsilon,c,k_1}$  then it remains there forward in time and the tracking error $(e_q(t),e_\Omega (t))$ converges exponentially to zero as $t$ goes to infinity.
\end{theorem}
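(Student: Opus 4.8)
The plan is to run a backstepping-style Lyapunov argument with $V_{k_1}$ of \eqref{def:V:quat} as the composite Lyapunov function, treating $\eta$ of \eqref{def:eta:quat} as a virtual control for the $e_q$-subsystem and $e_\Omega-\eta$ as the backstepping error. First I would rewrite the derivative $\dot V_0$ of Lemma \ref{lemma:V0:dot}: by the very definition \eqref{def:eta:quat}, the bracket $e_\Omega-2\alpha(e_{q,s}+|q|^2-1)e_{q,v}$ appearing in \eqref{dot:V0:quat} equals $(e_\Omega-\eta)-k_q e_{q,v}$, whence
\[
\dot V_0=-\alpha\bigl(2+e_{q,s}+(|q|^2-1)\bigr)(e_{q,s})^2-\tfrac{k_q}{2}|e_{q,v}|^2+\tfrac12\langle e_{q,v},e_\Omega-\eta\rangle .
\]
In particular, $\eta$ is chosen precisely so that if $e_\Omega$ were exactly $\eta$ then $\dot V_0$ would already be (locally) negative.

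Next I would close the loop: substituting the feedback \eqref{control:u:quat} into \eqref{error:dyn:quat:2} cancels the gyroscopic term and the feedforward $\dot\Omega_0$, leaving $\dot e_\Omega=-k_1 e_{q,v}-k_\Omega(e_\Omega-\eta)+\dot\eta$, i.e. $\dot e_\Omega-\dot\eta=-k_1 e_{q,v}-k_\Omega(e_\Omega-\eta)$ with $\dot\eta$ exactly the expression \eqref{def:eta:dot:quat}. Hence $\frac{d}{dt}\bigl(\frac{1}{4k_1}|e_\Omega-\eta|^2\bigr)=\frac{1}{2k_1}\langle e_\Omega-\eta,\dot e_\Omega-\dot\eta\rangle=-\tfrac12\langle e_\Omega-\eta,e_{q,v}\rangle-\frac{k_\Omega}{2k_1}|e_\Omega-\eta|^2$. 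Adding this to $\dot V_0$, the two indefinite cross terms $\pm\tfrac12\langle e_{q,v},e_\Omega-\eta\rangle$ cancel and
\[
\dot V_{k_1}=-\alpha\bigl(2+e_{q,s}+(|q|^2-1)\bigr)(e_{q,s})^2-\tfrac{k_q}{2}|e_{q,v}|^2-\frac{k_\Omega}{2k_1}|e_\Omega-\eta|^2 .
\]

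It then remains to convert this into exponential convergence on $S_{\epsilon,c,k_1}$. On that set $|e_{q,s}|\le|e_q|\le\sqrt{2V_{k_1}}\le\sqrt{2c}$ and $\bigl||q|^2-1\bigr|\le\epsilon$, so $2+e_{q,s}+(|q|^2-1)\ge 2-\sqrt{2c}-\epsilon>0$ exactly because $0<c<2$ and $0\le\epsilon<2-\sqrt{2c}$; hence, writing $V_{k_1}=\tfrac12(e_{q,s})^2+\tfrac12|e_{q,v}|^2+\tfrac{1}{4k_1}|e_\Omega-\eta|^2$, we obtain $\dot V_{k_1}\le-\mu V_{k_1}$ with $\mu=\min\{2\alpha(2-\sqrt{2c}-\epsilon),\,k_q,\,2k_\Omega\}>0$, so no LaSalle-type argument is even needed. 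For positive invariance, $\dot V_{k_1}\le0$ preserves $V_{k_1}\le c$, while Lemma \ref{lemma:pos:inv:H0} gives that $W:=|q|^2-1$ satisfies $\dot W=-2\alpha W(W+1)$, so $\frac{d}{dt}W^2=-4\alpha W^2(W+1)\le-4\alpha(1-\epsilon)W^2\le0$ whenever $|W|\le\epsilon<1$; this preserves $\bigl||q|^2-1\bigr|\le\epsilon$, and thus $S_{\epsilon,c,k_1}$ is positively invariant. Finally $V_{k_1}(t)\le V_{k_1}(0)e^{-\mu t}$ yields exponential decay of $e_q$ and of $e_\Omega-\eta$; since $W\to0$ exponentially as well and $\eta$ in \eqref{def:eta:quat} is bounded on $S_{\epsilon,c,k_1}$ by a constant times $|e_{q,v}|$, $\eta\to0$ exponentially, whence $e_\Omega=(e_\Omega-\eta)+\eta\to0$ exponentially.

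The main obstacle is bookkeeping rather than ideas: the crucial point is the exact cancellation of the indefinite cross terms, which is precisely what dictates the forms of $\eta$, $\dot\eta$ and $\tau$ in \eqref{def:eta:quat}, \eqref{def:eta:dot:quat} and \eqref{control:u:quat}, and one must carry the stable-embedding contribution $|q|^2-1$ faithfully through every differentiation. The only genuinely analytic point is calibrating $(\epsilon,c)$ so that $2+e_{q,s}+(|q|^2-1)$ stays positive on $S_{\epsilon,c,k_1}$ and, independently, so that the $|q|^2$-subsystem keeps the trajectory inside the $\epsilon$-tube around ${\rm S}^3$ — which is exactly where the hypotheses $0<c<2$ and $0\le\epsilon<\min\{2-\sqrt{2c},1\}$ enter.
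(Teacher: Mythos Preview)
Your proof is correct and follows essentially the same backstepping Lyapunov argument as the paper: the same rewriting of $\dot V_0$ via \eqref{def:eta:quat}, the same cancellation of the cross term $\tfrac12\langle e_{q,v},e_\Omega-\eta\rangle$ after substituting \eqref{control:u:quat}, the same bound $2+e_{q,s}+(|q|^2-1)\ge 2-\sqrt{2c}-\epsilon$ on $S_{\epsilon,c,k_1}$, and the same $V_{\rm aux}=\tfrac14(|q|^2-1)^2$ argument (your $W^2/4$) for invariance of the $\epsilon$-tube. Your write-up is in fact slightly more explicit than the paper's, giving the exponential rate $\mu=\min\{2\alpha(2-\sqrt{2c}-\epsilon),k_q,2k_\Omega\}$ and spelling out why $e_\Omega\to0$ via $|\eta|\le C|e_{q,v}|$ on $S_{\epsilon,c,k_1}$.
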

\begin{proof}
Let us first show that the set 
\begin{equation}\label{def:M:epsilon}
M_\epsilon = \{ (q, \Omega) \in \mathbb H \times \mathbb R^3\mid  | |q|^2-1| \leq \epsilon \}
\end{equation}
  is positively invariant for \eqref{ext:dyn:quat}. Let $V_{\rm aux}(q) = \frac{1}{4} (|q|^2-1)^2$. Then, along the trajectory of \eqref{ext:dyn:quat}, $\frac{d}{dt}V_{\rm aux} = \frac{1}{2}(|q|^2-1) (q^*\dot q + \dot q^* q) = (|q|^2-1) [q^*\dot q]_s = -\alpha (|q|^2-1)^2|q|^2 \leq -\alpha (1-\epsilon) (|q|^2-1)^2\leq -4\alpha (1-\epsilon) V_{\rm aux}$ since $0\leq\epsilon <1$. It follows that $M_\epsilon = V_{\rm aux}^{-1}([0,\epsilon^2/4])$ is positively invariant for \eqref{ext:dyn:quat}.

By employing Lemma \ref{lemma:V0:dot} with \eqref{error:dyn:quat:2}, \eqref{def:eta:quat} and \eqref{def:eta:dot:quat}, for any $(q,\Omega,e_q,e_\Omega) \in S_{\epsilon,c,k_1}$, we conclude the following: 
\begin{align}
\frac{d}{dt} V_{k_1} &= -\alpha (2 + e_{q,s} + (|q|^2 -1) )(e_{q,s})^2 \nonumber \\ &\quad + \frac{1}{2}\langle e_{q,v}, e_\Omega - 2\alpha (e_{q,s}  + |q|^2 - 1)e_{q,v} \rangle \nonumber \\ &\quad + \frac{1}{2k_1} \langle e_\Omega - \eta, \dot e_\Omega -\dot \eta\rangle \nonumber  \\
&=-\alpha (2 + e_{q,s} + (|q|^2 -1) )(e_{q,s})^2  \nonumber \\ &\quad + \frac{1}{2} \langle e_{q,v}, -k_{v}e_{q,v} + e_\Omega -\eta\rangle  \nonumber \\ &\quad + \frac{1}{2k_1} \langle e_\Omega - \eta, - k_1e_{q,v}-k_\Omega (e_\Omega - \eta)\rangle \nonumber  \\
&\leq -\alpha(2-\sqrt{2c} -\epsilon)(e_{q,s})^2 \! - \frac{k_q}{2}  |e_{q,v}|^2 \! \nonumber \\ &\quad -  \frac{k_\Omega}{2k_1} | e_\Omega - \eta|^2 \label{dot:V:quat}
\end{align}
which follows from the fact that for all $(q,\Omega,e_q,e_\Omega) \in S_{\epsilon,c,k_1}$, we have $| |q|^2 - 1| \leq \epsilon$ and $|e_{q,s}|  \leq \sqrt{2V_{k_1}(e_q,e_\Omega) } \leq \sqrt{2c}$.  The Lyapunov function $V_{k_1}(e_q,e_\Omega)$ is quadratic and positive-definite in $(e_q, e_\Omega -\eta)$, and  the right side of \eqref{dot:V:quat} is quadratic and negative-definite in $(e_q, e_\Omega -\eta)$. Therefore, the set $S_{\epsilon,c,k_1}$ is positively invariant for \eqref{ext:dyn:quat} and \eqref{error:dyn:quat}, and that $(e_q, e_\Omega - \eta)$ converges exponentially to zero as $t$ tends to infinity.  In view of the definition of $\eta$ in \eqref{def:eta:quat}, it is easy to show that the closed-loop error dynamics \eqref{error:dyn:quat} with the control \eqref{control:u:quat} is exponentially stable  on $S_{\epsilon,c,k_1}$. The positive invariance of $S_{\epsilon,c,k_1}$  is now trivial to show. 
\end{proof}

Since  the original dynamics \eqref{original:dyn:quat} are defined on $\text{S}^3 \times \mathbb R^3$, let us evaluate Theorem \ref{main:thm:quat} on $\text{S}^3 \times \mathbb R^3$. When $|q|=1$, we have $ |e_q| = |q_0^*q -1| = |q-q_0| \leq 2$, where the equality holds if and only if $ q = -q_0$.   We now show the almost semi-global property  of the controller  on ${\rm S}^3$ in the following corollary.
\begin{corollary}
 For any initial state $(q, \Omega, e_q,e_\Omega)$ with $|q| = 1$ and $ |e_q| <2$, there are numbers $c$, $\epsilon$  and $k_1$ satisfying $0<c<2$,  $0\leq\epsilon<\min\{2-\sqrt{2c},1\}$ and $k_1>0$ such that the region of convergence $S_{\epsilon, c, k_1}$ defined in \eqref{def:S:ep:b:k} contains the point $(q, \Omega, e_q,e_\Omega)$. 
 
\begin{proof}
This can be easily seen if we take the limit of $S_{\epsilon, c, k_1}$ as $c\rightarrow 2^-$, $\epsilon \rightarrow 0^+$ and $k_1 \rightarrow \infty$. 
\end{proof}
\end{corollary}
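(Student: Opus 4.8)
The plan is to exploit the freedom in the three parameters $c$, $\epsilon$, $k_1$ together with the strict inequality $|e_q|<2$, essentially turning the statement into a limiting/openness argument. First I would observe that since $|q|=1$ we have $||q|^2-1|=0$, so the first defining inequality of $S_{\epsilon,c,k_1}$ in \eqref{def:S:ep:b:k} is satisfied for \emph{every} admissible $\epsilon\ge 0$; it will therefore not constrain us, and at the end I can simply take $\epsilon=0$ (or any sufficiently small positive number).

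Next I would deal with the second defining inequality $V_{k_1}(e_q,e_\Omega)\le c$. Because $|e_q|<2$, the number $\tfrac12|e_q|^2$ is \emph{strictly} less than $2$, so I can choose $c$ with $\tfrac12|e_q|^2<c<2$. With such a $c$ fixed, $\sqrt{2c}<2$, hence $2-\sqrt{2c}>0$ and $\min\{2-\sqrt{2c},1\}>0$; thus the admissible range $0\le\epsilon<\min\{2-\sqrt{2c},1\}$ is nonempty and the choice $\epsilon=0$ is legitimate.

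It then remains to absorb the $e_\Omega$-contribution to $V_{k_1}$. The vector $\eta$ defined in \eqref{def:eta:quat} depends only on $e_q$ and $|q|^2$, both of which are fixed here (and $|q|^2=1$), so $|e_\Omega-\eta|$ is a fixed finite number. Consequently $\tfrac1{4k_1}|e_\Omega-\eta|^2\to 0$ as $k_1\to\infty$, and I would pick $k_1>0$ large enough that $\tfrac1{4k_1}|e_\Omega-\eta|^2\le c-\tfrac12|e_q|^2$. By the definition \eqref{def:V:quat} of $V_{k_1}$ this yields $V_{k_1}(e_q,e_\Omega)\le c$, and combined with $||q|^2-1|=0\le\epsilon$ we conclude $(q,\Omega,e_q,e_\Omega)\in S_{\epsilon,c,k_1}$, which is exactly the claim.

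I do not anticipate a genuine obstacle: the only point requiring a little care is to check that the constraints $0<c<2$, $0\le\epsilon<\min\{2-\sqrt{2c},1\}$ and $k_1>0$ can all be met simultaneously for the given point, and this is precisely what the strict bound $|e_q|<2$ guarantees, since it forces $\tfrac12|e_q|^2<2$ and hence leaves room to slot $c$ strictly between $\tfrac12|e_q|^2$ and $2$.
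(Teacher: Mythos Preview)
Your argument is correct and is exactly the paper's approach made explicit: the paper's proof is the single sentence that one takes the limit $c\to 2^-$, $\epsilon\to 0^+$, $k_1\to\infty$, and you have simply unpacked this by choosing $c\in(\tfrac12|e_q|^2,2)$, $\epsilon=0$, and $k_1$ large enough to absorb $\tfrac1{4k_1}|e_\Omega-\eta|^2$. Nothing is missing.
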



\subsection{Robust Tracking Control of a Rigid Body in the Presence of Unknown Constant Disturbance}
The equation of motion of the rigid body system is given by
\begin{subequations}\label{original:dyn:quat:Del}
\begin{align}
\dot q &= \frac{1}{2}q\Omega \label{original:dyn:quat:Del:1},\\
\dot \Omega & = {\mathbb I}^{-1} ( ({\mathbb I } \Omega) \times \Omega)) +  {\mathbb I}^{-1}\tau + {\mathbb I}^{-1} \Delta,
\end{align} 
\end{subequations}
where $(q,\Omega) \in {\rm S}^3 \times \mathbb R^3$ is the state of the system, $\tau \in \mathbb R^3$ is the control, and $\Delta \in \mathbb R^3$ is an unknown constant disturbance. In real applications, disturbances are not constant but slowly  varying in time. Hence, the constance assumption on the disturbance is a good approximation to slowly varying disturbances.

Using stable embedding, as in the previous section, we extend the dynamics \eqref{original:dyn:quat:Del} from $\text{S}^3 \times \mathbb R^3$ to $\mathbb H \times \mathbb R^3$ as 
\begin{subequations}\label{ext:dyn:quat:Del}
\begin{align}
\dot q &= \frac{1}{2}q\Omega - \alpha (|q|^2-1) q \label{ext:dyn:quat:Del1},\\
\dot \Omega & = {\mathbb I}^{-1} ( ({\mathbb I } \Omega) \times \Omega) )+  {\mathbb I}^{-1}\tau + {\mathbb I}^{-1} \Delta
\end{align} 
\end{subequations}
with some $\alpha>0$. Consider a reference trajectory $(q_0(t), \Omega_0(t)) \in {\rm S}^3 \times \mathbb R^3$ with $t\geq 0$ that satisfies \eqref{original:dyn:quat:Del:1} or  \eqref{ext:dyn:quat:Del1}, i.e. $\dot q_0 (t) = (1/2)q_0(t)\Omega_0(t)$ for all $t\geq 0$. Our goal is to design a tracking controller such that the trajectory of \eqref{ext:dyn:quat:Del} asymptotically converges to the reference trajectory as time goes to infinity even in the presence of the disturbance $\Delta$. With the similar computation as carried out for \eqref{error:dyn:quat}, the corresponding tracking error $(e_q,e_\Omega)$, defined in \eqref{def:tracking:error}, for the system dynamics \eqref{ext:dyn:quat:Del} satisfies
\begin{subequations}\label{error:dyn:quat:Del}
\begin{align}
\dot e_q &= \frac{1}{2} (e_q \Omega_0 - \Omega_0 e_q) + \frac{1}{2}(1+e_q)e_\Omega 
- \alpha (|q|^2 - 1)(1+e_q), \label{eq:dot:Del}\\ 
\dot e_\Omega &= {\mathbb I}^{-1} ( ({\mathbb I } \Omega) \times \Omega) )+  {\mathbb I}^{-1}\tau +  {\mathbb I}^{-1}\Delta - \dot\Omega_0. \label{error:dyn:quat:Del:2}
\end{align} 
\end{subequations}
The control objective is to design a control law that asymptotically stabilizes the error dynamics \eqref{error:dyn:quat:Del} to zero. We propose the following form of control law:
\begin{equation}\label{control:u:quat:Delta}
\tau =  - ({\mathbb I } \Omega) \times \Omega+ \mathbb I ( - k_1e_{q,v}-k_\Omega (e_\Omega - \eta) + \dot \eta + \dot \Omega_0 ) - \bar \Delta
\end{equation}
with $k_1>0$  and $k_\Omega >0$, where $\eta$ and $\dot \eta$ are  defined in \eqref{def:eta:quat} and \eqref{def:eta:dot:quat}, and  
 $\bar \Delta $ is generated by
\begin{equation}\label{delta:dot}
\dot{\bar \Delta} = \frac{k_\Delta}{2k_1}\mathbb I^{-1}(e_\Omega - \eta)
\end{equation}
with initial condition $\bar \Delta (0) = 0$ and $k_\Delta >0$. Moreover, considering the fact that the disturbance $\Delta$ is constant, the disturbance error $e_\Delta = \Delta - \bar\Delta$ satisfies
\begin{equation}\label{dot:bar:eDelta}
\dot e_\Delta = -\frac{k_\Delta}{2k_1}\mathbb I^{-1}(e_\Omega - \eta).
\end{equation}
We shall now establish that the proposed control law \eqref{control:u:quat:Delta} enables the system dynamics \eqref{ext:dyn:quat:Del} to track the reference asymptotically. In other words, the proposed control law \eqref{control:u:quat:Delta} asymptotically stabilizes the tracking error dynamics \eqref{error:dyn:quat:Del} to zero.
\begin{theorem} \label{main:thm:quat:Del} 
Assume that there is a known number $\delta>0$ such that $\| \Delta \| \leq \delta$.
Let
\begin{equation}\label{def:V:quat:Delta}
V(e_q,e_\Omega ,e_\Delta) = V_{k_1}(e_q,e_\Omega) + \frac{1}{2k_\Delta}|e_\Delta |^2,
\end{equation}
where the function $ V_{k_1}$ is defined in \eqref{def:V:quat}. Then, when the feedback \eqref{control:u:quat:Delta} together with \eqref{delta:dot} is applied to \eqref{error:dyn:quat:Del},  the tracking error trajectory $(e_q(t), e_\Omega (t))$ asymptotically converges to zero as $t$ goes to infinity, for any initial state $q(0) \in \mathbb H$  and any initial tracking error $(e_q(0), e_\Omega(0))$ that satisfy
\begin{equation}\label{qzero:less}
  ||q(0)|^2 -1| \leq \epsilon
\end{equation}
and
\begin{equation}\label{Vk1:zero}
V_{k_1} (e_q(0), e_\Omega (0)) \leq c - \frac{\delta^2}{2 k_\Delta},
\end{equation}
where the constants $c$, $\epsilon$ and $k_\Delta$ are such that $0<c<2$, $0\leq\epsilon<\min\{2-\sqrt{2c},1\}$ and $k_\Delta > \delta^2/2c$. 

\begin{proof}
First, consider the fact that  the set $M_\epsilon$ defined in \eqref{def:M:epsilon}
  is positively invariant for \eqref{ext:dyn:quat:Del} as established in the proof of Theorem \ref{main:thm:quat}.  We now show that the set 
  \begin{align}\label{def:S:Delta}
S=  &\{ (q,\Omega, e_q,e_\Omega, e_\Delta) \in \mathbb H \times \mathbb R^3 \times \mathbb H \times \mathbb R^3 \times \mathbb R^3 \mid  \nonumber \\
&\quad  | |q|^2-1| \leq \epsilon, V(e_q,e_\Omega, e_\Delta) \leq c \}
\end{align}
is positively invariant. Note that for any $(q,\Omega,e_q,e_\Omega, e_\Delta) \in S$, we have  
\[
| |q|^2 - 1| \leq \epsilon \quad \text{and} \quad |e_{q,s}|  \leq \sqrt{2V(e_q,e_\Omega, e_\Delta) } \leq \sqrt{2c}.
\]
  As computed in \eqref{dot:V:quat}, with the help of \eqref{dot:bar:eDelta}, it is straightforward to show that along the closed-loop trajectory with any initial condition in $S$,
\begin{align*}
\dot V &\leq  -\alpha(2-\sqrt{2c} -\epsilon)(e_{q,s})^2 - \frac{k_q}{2}  |e_{q,v}|^2 \\ &\quad-  \frac{k_\Omega}{2k_1} | e_\Omega - \eta|^2 + \frac{1}{2k_1}\langle e_\Omega - \eta, \mathbb I^{-1}e_\Delta \rangle + \frac{1}{k_\Delta}\langle e_\Delta, \dot e_\Delta\rangle\\
&=  -\alpha(2-\sqrt{2c} -\epsilon)(e_{q,s})^2 - \frac{k_q}{2}  |e_{q,v}|^2 -  \frac{k_\Omega}{2k_1} | e_\Omega - \eta|^2.
\end{align*}  
By LaSalle-Yoshizawa's theorem or Theorem 8.4 in \cite{Kh02}, the trajectory asymptotically converges to the set 
\[
\alpha(2-\sqrt{2c} -\epsilon)(e_{q,s})^2 + \frac{k_q}{2}  |e_{q,v}|^2+  \frac{k_\Omega}{2k_1} | e_\Omega - \eta|^2 = 0, 
\]
which only consists of the set $\{ e_q = 0, e_\Omega = 0\}$. Hence, $\|(e_q(t),e_\Omega (t))\| \rightarrow 0$ as $t\rightarrow \infty$.  In other words, the set $S$ is positively invariant, and any trajectory starting in $S$ satisfies $\lim_{t\rightarrow \infty}\| (e_q(t), e_\Omega (t))\| = 0$. In particular, if any initial state satisfies \eqref{qzero:less} and \eqref{Vk1:zero}, then it belongs to $S$, i.e.,
 \begin{align*}
 V|_{t=0} = V_{k1}(e_q(0),e_\Omega (0)) + \frac{1}{2k_\Delta}|e_\Delta(0)|^2 \leq c
 \end{align*}
 and
 \[
 ||q(0)|^2 -1| \leq \epsilon,
 \]
 where $|e_\Delta(0)|  = |\Delta - \bar \Delta (0)| = |\Delta |\leq \delta $ is used. Therefore, the tracking error trajectory $(e_q(t), e_\Omega (t))$ asymptotically converges to zero as $t$ tends to infinity. This proves the assertion.
\end{proof}
\end{theorem}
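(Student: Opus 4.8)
The plan is to mimic the structure of the proof of Theorem \ref{main:thm:quat}, now carrying along the extra disturbance-error coordinate $e_\Delta$. First I would invoke, verbatim, the fact established inside the proof of Theorem \ref{main:thm:quat} that the slab $M_\epsilon$ of \eqref{def:M:epsilon} is positively invariant for the extended plant \eqref{ext:dyn:quat:Del}; the disturbance term $\mathbb I^{-1}\Delta$ enters only the $\dot\Omega$ equation and does not touch the computation $\frac{d}{dt}V_{\rm aux}=-\alpha(|q|^2-1)^2|q|^2$, so that part transfers unchanged. This guarantees $||q(t)|^2-1|\le\epsilon$ for all $t\ge0$ once it holds at $t=0$, which is exactly hypothesis \eqref{qzero:less}.

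Next I would differentiate the augmented Lyapunov candidate $V=V_{k_1}(e_q,e_\Omega)+\frac{1}{2k_\Delta}|e_\Delta|^2$ along the closed loop. The $V_{k_1}$ part reproduces \eqref{dot:V:quat} except that $\dot e_\Omega$ now carries the extra summand $\mathbb I^{-1}\Delta-$ wait, more precisely $\mathbb I^{-1}e_\Delta$ after the feedback \eqref{control:u:quat:Delta} cancels $\bar\Delta$ against $\Delta$ up to the error $e_\Delta=\Delta-\bar\Delta$; this produces the cross term $\frac{1}{2k_1}\langle e_\Omega-\eta,\mathbb I^{-1}e_\Delta\rangle$. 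The $\frac{1}{2k_\Delta}|e_\Delta|^2$ part contributes $\frac{1}{k_\Delta}\langle e_\Delta,\dot e_\Delta\rangle$, and substituting the update law \eqref{dot:bar:eDelta} for $\dot e_\Delta$ produces exactly $-\frac{1}{2k_1}\langle e_\Omega-\eta,\mathbb I^{-1}e_\Delta\rangle$, which cancels the cross term. What remains is the negative-semidefinite bound
\begin{align*}
\dot V \le -\alpha(2-\sqrt{2c}-\epsilon)(e_{q,s})^2-\tfrac{k_q}{2}|e_{q,v}|^2-\tfrac{k_\Omega}{2k_1}|e_\Omega-\eta|^2,
\end{align*}
valid on the sublevel set $S$ of \eqref{def:S:Delta}, using $||q|^2-1|\le\epsilon$ and $|e_{q,s}|\le\sqrt{2V}\le\sqrt{2c}$; the coefficient $2-\sqrt{2c}-\epsilon$ is positive by the standing hypothesis $0\le\epsilon<2-\sqrt{2c}$.

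From $\dot V\le0$ on $S$ I would conclude $S$ is positively invariant, so the full state stays bounded and in particular $||q|^2-1|$ stays $\le\epsilon$. Then I would apply LaSalle–Yoshizawa (Theorem 8.4 in \cite{Kh02}): the right-hand side of the $\dot V$ bound is a continuous nonnegative function of the state that tends to zero along trajectories, and it vanishes precisely when $e_{q,s}=0$, $e_{q,v}=0$, and $e_\Omega=\eta$; but $\eta$ from \eqref{def:eta:quat} is a multiple of $e_{q,v}$, hence $e_{q,v}=0$ forces $\eta=0$ and thus $e_\Omega=0$. Therefore $\|(e_q(t),e_\Omega(t))\|\to0$. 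Finally I would check that the stated initial-data conditions put the initial state in $S$: \eqref{qzero:less} is the slab condition, and since $\bar\Delta(0)=0$ we have $|e_\Delta(0)|=|\Delta|\le\delta$, so $V|_{t=0}=V_{k_1}(e_q(0),e_\Omega(0))+\frac{1}{2k_\Delta}|\Delta|^2\le (c-\frac{\delta^2}{2k_\Delta})+\frac{\delta^2}{2k_\Delta}=c$ by \eqref{Vk1:zero}, so the trajectory starts in $S$ and the conclusion follows.

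The only genuinely delicate point is the LaSalle–Yoshizawa step: unlike Theorem \ref{main:thm:quat}, here $\dot V$ is merely negative \emph{semi}definite (it carries no term controlling $e_\Delta$), so one cannot claim exponential convergence, only asymptotic convergence of $(e_q,e_\Omega)$ — and one must be careful that LaSalle–Yoshizawa gives convergence of the \emph{integrand} rather than convergence to a limit set, which is why the argument is phrased in terms of the nonnegative function going to zero rather than a classical invariance-principle argument (the system is time-varying through $\Omega_0(t)$, $\dot\Omega_0(t)$, so LaSalle's invariance principle proper does not apply). Everything else is bookkeeping that parallels the proof of Theorem \ref{main:thm:quat} line by line.
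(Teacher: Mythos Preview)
Your proposal is correct and follows essentially the same approach as the paper's own proof: invoke the positive invariance of $M_\epsilon$ from Theorem~\ref{main:thm:quat}, differentiate the augmented Lyapunov function $V$, observe that the adaptation law \eqref{dot:bar:eDelta} cancels the cross term $\frac{1}{2k_1}\langle e_\Omega-\eta,\mathbb I^{-1}e_\Delta\rangle$ (using symmetry of $\mathbb I^{-1}$), apply LaSalle--Yoshizawa, and finally verify that \eqref{qzero:less}--\eqref{Vk1:zero} together with $\bar\Delta(0)=0$ place the initial state in the sublevel set $S$. Your added remarks on why LaSalle--Yoshizawa (rather than LaSalle's invariance principle) is the right tool, and why only asymptotic (not exponential) convergence is claimed, are accurate refinements that the paper leaves implicit.
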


The following corollary is about almost semi-global property on the configuration space ${\rm S}^3$ of the tracking control law. 
\begin{corollary}
Suppose that an upper bound $\delta>0$ for the unknown constant disturbance $\Delta$ is known. Then, 
 for any initial state  with $|q(0)| = 1$ and $ |e_q(0)| <2$, there are numbers $c$,  $k_1$ and $k_\Delta$ satisfying $0<c<2$,   $k_\Delta > \delta^2/2c$ and $k_1>0$ such that the initial state satisfies \eqref{Vk1:zero} as well as \eqref{qzero:less}. 
 
\begin{proof}
Since $|e_q(0)|^2/2 <2$, we can take  $c$ such that $|e_q(0)|^2/2 < c<2$. Then, take $k_1$ and $k_\Delta$ large enough such that 
\[
 \frac{1}{4k_1}|e_\Omega (0) - \eta(0)|^2 + \frac{\delta^2}{2k_\Delta} <c- \frac{1}{2}|e_q(0)|^2.
 \]
 Then, \eqref{Vk1:zero} is satisfied. 
 
\end{proof}
\end{corollary}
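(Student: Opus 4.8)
The plan is to reduce this corollary to a direct verification that the stated parameter choices place the given initial state inside the region $S$ from Theorem \ref{main:thm:quat:Del}, so that the conclusion follows immediately from that theorem. The starting observation is the one already recorded in the excerpt just before the corollary: when $|q(0)| = 1$, the error quaternion satisfies $|e_q(0)| = |q_0(0)^*q(0) - 1| = |q(0) - q_0(0)| \leq 2$, with equality only in the antipodal case $q(0) = -q_0(0)$. Hence the hypothesis $|e_q(0)| < 2$ is exactly the statement $\tfrac{1}{2}|e_q(0)|^2 < 2$, which leaves strict room between $V_0(e_q(0)) = \tfrac12|e_q(0)|^2$ and the upper threshold $2$.

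First I would choose $c$ in the open interval $\bigl(\tfrac12|e_q(0)|^2,\, 2\bigr)$, which is nonempty precisely because of the strict inequality $|e_q(0)| < 2$; this guarantees $0 < c < 2$ as required. With $c$ fixed, the remaining budget $c - \tfrac12|e_q(0)|^2$ is a fixed positive number. I would then note that the quantity $\eta(0)$ defined in \eqref{def:eta:quat} depends only on the initial data $(e_{q,s}(0), e_{q,v}(0), |q(0)|^2)$ and the fixed gains $\alpha, k_q$, so $|e_\Omega(0) - \eta(0)|^2$ is a fixed finite number independent of $k_1$ and $k_\Delta$. Therefore I can pick $k_1 > 0$ large enough that $\tfrac{1}{4k_1}|e_\Omega(0) - \eta(0)|^2 < \tfrac12\bigl(c - \tfrac12|e_q(0)|^2\bigr)$, and then pick $k_\Delta$ large enough that simultaneously $k_\Delta > \delta^2/2c$ and $\tfrac{\delta^2}{2k_\Delta} < \tfrac12\bigl(c - \tfrac12|e_q(0)|^2\bigr)$. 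Adding these two estimates yields $\tfrac{1}{4k_1}|e_\Omega(0) - \eta(0)|^2 + \tfrac{\delta^2}{2k_\Delta} < c - \tfrac12|e_q(0)|^2$, which is exactly the inequality displayed in the author's proof and which rearranges to $V_{k_1}(e_q(0), e_\Omega(0)) \leq c - \tfrac{\delta^2}{2k_\Delta}$, i.e. condition \eqref{Vk1:zero}.

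It remains to address condition \eqref{qzero:less}. Here the hypothesis $|q(0)| = 1$ gives $\bigl||q(0)|^2 - 1\bigr| = 0 \leq \epsilon$ trivially, for any admissible choice of $\epsilon$; one only needs $\epsilon$ to satisfy $0 \leq \epsilon < \min\{2 - \sqrt{2c}, 1\}$, and since $c < 2$ we have $2 - \sqrt{2c} > 0$, so $\epsilon = 0$ is always a legitimate choice (the corollary statement does not even require naming $\epsilon$, but it is implicitly available). With both \eqref{qzero:less} and \eqref{Vk1:zero} verified, Theorem \ref{main:thm:quat:Del} applies and the tracking error converges asymptotically to zero.

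I do not anticipate a genuine obstacle: the argument is a parameter-selection exercise exploiting that $k_1$ and $k_\Delta$ appear only in denominators, so both nuisance terms can be driven below any prescribed positive level. The one point deserving care is the order of quantifier selection — $c$ must be fixed before $k_1$ and $k_\Delta$, since the target gap $c - \tfrac12|e_q(0)|^2$ and the constraint $k_\Delta > \delta^2/2c$ both depend on $c$ — and one should double-check that $\eta(0)$ truly carries no hidden dependence on $k_1$ or $k_\Delta$ (it does not, as \eqref{def:eta:quat} involves only $k_q$ and $\alpha$). Beyond that, the proof is essentially the three or four lines already sketched by the authors, and my write-up would differ only in making the "take $k_1$ and $k_\Delta$ large enough" step fully explicit via the split-the-budget-in-half device.
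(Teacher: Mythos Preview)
Your proposal is correct and follows essentially the same approach as the paper: choose $c$ in the gap $(\tfrac12|e_q(0)|^2,\,2)$, then inflate $k_1$ and $k_\Delta$ so the remaining terms fit under the budget $c - \tfrac12|e_q(0)|^2$. Your write-up is slightly more explicit (the split-the-budget-in-half device, the observation that $\eta(0)$ is independent of $k_1,k_\Delta$, and the verification of \eqref{qzero:less} via $|q(0)|=1$), but the argument is the same.
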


\begin{remark}
If we assume that $\ddot \Omega_0(t)$, $0\leq t<\infty$, is bounded in Theorem \ref{main:thm:quat:Del}, then we can use Barbalat's lemma (Lemma 8.2 in \cite{Kh02}) to prove $\lim_{t\rightarrow \infty}| e_\Delta (t)| = 0$.   The proof goes as follows. Plug \eqref{control:u:quat:Delta} into \eqref{error:dyn:quat:Del:2} to obtain 
\begin{equation}\label{dot_e_Omega_2}
\dot e_\Omega =  - k_1e_{q,v}-k_\Omega (e_\Omega - \eta) + \dot \eta   +  {\mathbb I}^{-1}e_\Delta.
\end{equation}
From \eqref{eq:dot:Del} and \eqref{dot_e_Omega_2}, one can see that $\ddot e_\Omega (t)$ is bounded in time if $\ddot \Omega_0(t)$, $0\leq t<\infty$, is bounded, which implies that $\dot e_\Omega (t)$ is uniformly continuous. We already know that its integral $e_\Omega (t)$ satisfies $\lim_{t\rightarrow\infty}e_\Omega(t) = 0$. Hence, by Barbalat's lemma, we have $\lim_{t\rightarrow\infty}\dot e_\Omega(t) = 0$. Since $\lim_{t\rightarrow\infty} e_q(t) = 0$, it follows from \eqref{dot_e_Omega_2} that $\lim_{t\rightarrow\infty} e_\Delta(t) = 0$.

\end{remark}

\section{Simulation Results}\label{sec:numerical}
We carry out three simulation studies to demonstrate the performance of the robust tracking control law \eqref{control:u:quat:Delta}. The following data has been considered for the numerical simulations:
\begin{enumerate}
\item The moment of inertia matrix:
\[
\mathbb{I}=\text{diag}(4.250,4.337,3.664).
\]
\item The control gains in  \eqref{control:u:quat:Delta} are given by $k_1=3$ and $k_{\Omega}=3$.  
\item The gain $k_\Delta$ in \eqref{delta:dot} will be set to a couple of different values later. 
\item The parameter $\alpha = 1$ in \eqref{original:dyn:quat:Del}.
\end{enumerate}
 The following reference trajectory is chosen for attitude:
\[
q_0(t) = \cos t + \cos t\sin t\textbf{i} + \sin^2 t\textbf{j} + 0\textbf{k} \in {\rm S}^3,
\]
which induces the following reference trajectory for angular velocity and acceleration:
\[
\Omega_0(t) = (2\cos^3 t,\, (2+2\cos^2 t)\sin t,\, -2 \sin^2 t)
\]
and
\[
\dot{\Omega}_0(t) = (-6\cos^2 t \sin t,\, (-2+6\cos^2 t)\cos t,\, -4\sin t \cos t).
\]
The initial condition for the system \eqref{original:dyn:quat:Del} is given by
\[ 
q(0) = -q_0(0) \in {\rm S}^3, \quad \Omega(0) = \Omega_0({\pi}/{6}) \in \mathbb R^3
\]
in terms of the reference trajectory $(q_0(t), \Omega_0(t))$. Notice that the initial attitude tracking error has magnitude $|q(0) - q_0(0)| = 2$, which is the maximum possible attitude tracking error on unit quaternions, so this tracking error does not satisfy \eqref{Vk1:zero} since $c<2$. Hence, this is a  challenging initial condition for tracking.  

In the first case study, we consider the  constant disturbance vector and the gain
\[
\Delta = (1,1,1), \quad  k_\Delta = 1000
\]
for \eqref{delta:dot}, and the simulation results are reported in Fig.\ref{fig:constant:dis}. It is observed in the first two subfigures of Fig.\ref{fig:constant:dis} that both the attitude tracking error and the angular velocity tracking error converge to zero as time goes to infinity.  It can be seen in the third subfigure of Fig.\ref{fig:constant:dis} that the disturbance estimate $\bar \Delta (t)$ also converges to the true value of disturbance $\Delta$.  

\begin{figure}[htb]
 \centering
        \includegraphics[height=1.5in]{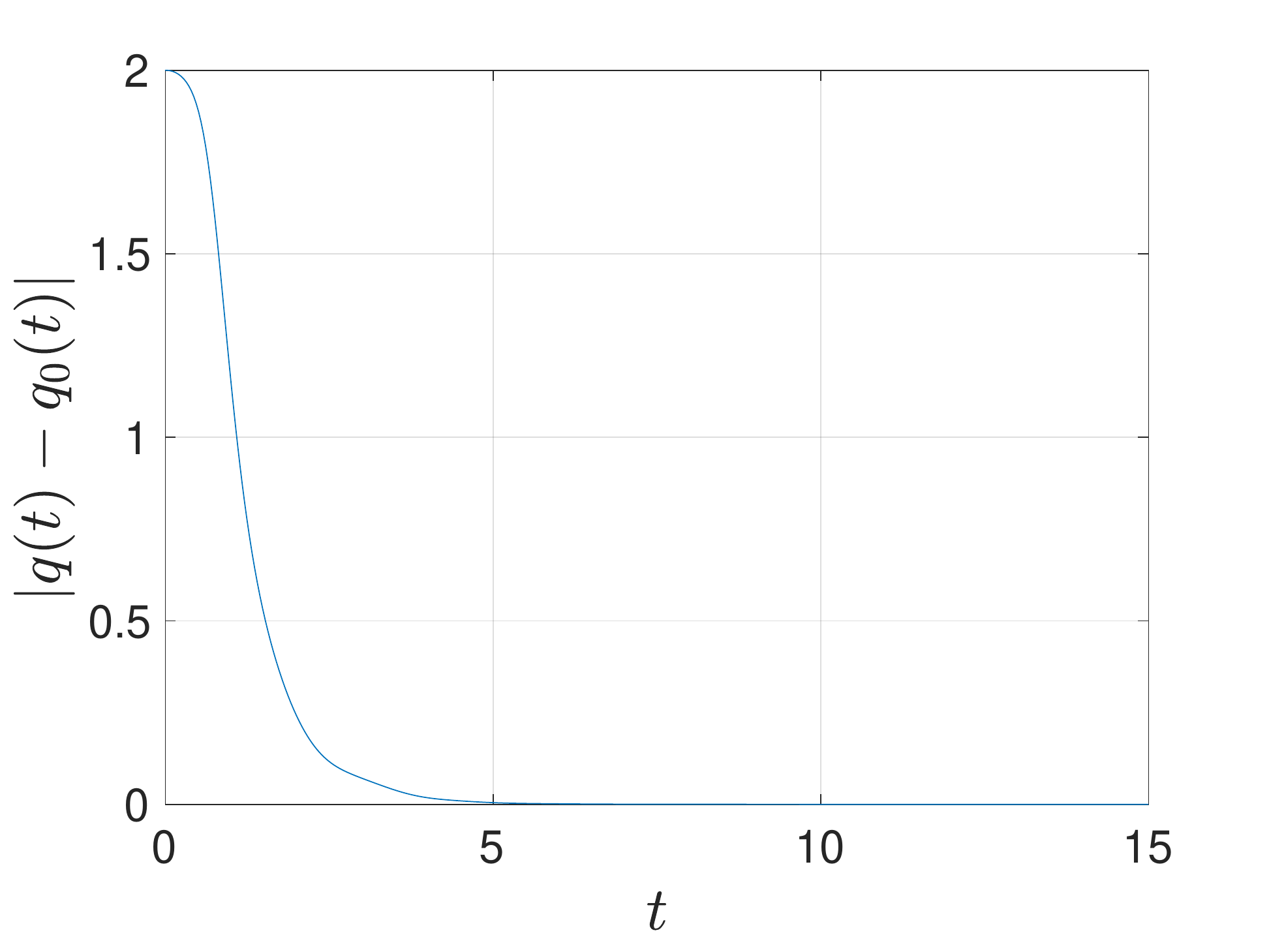}
        \includegraphics[height=1.5in]{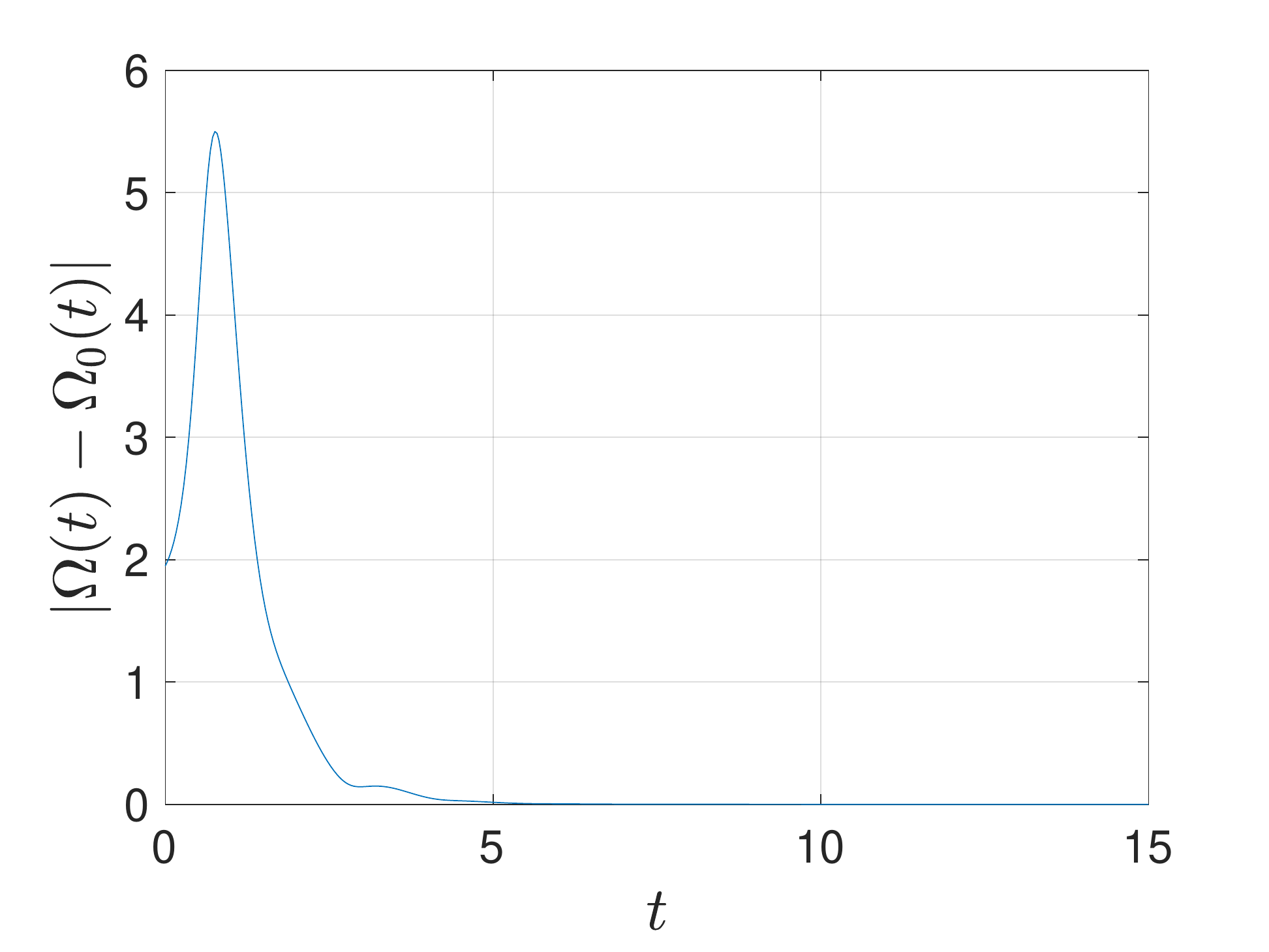}
        \includegraphics[height=1.5in]{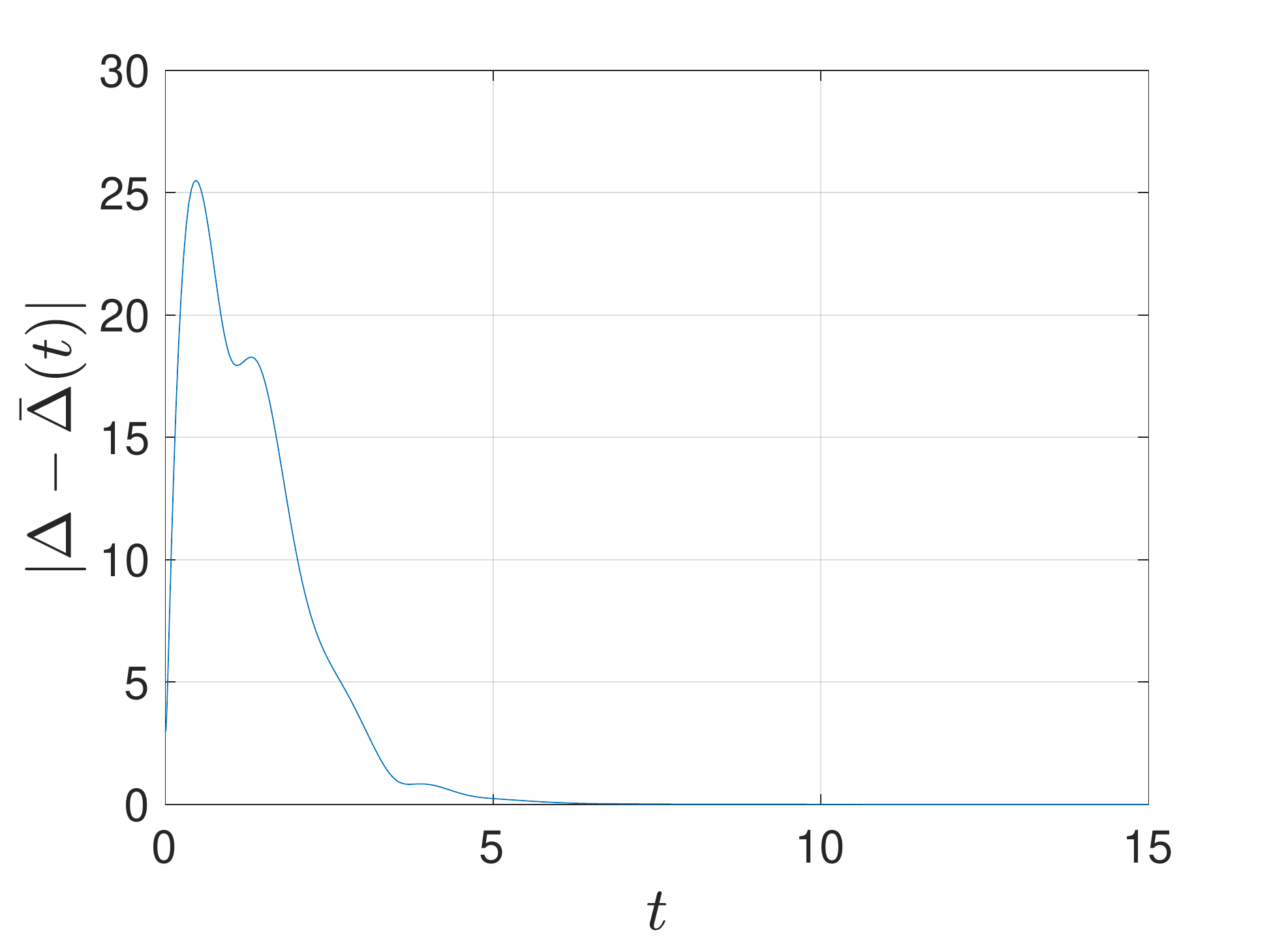}
    \caption{\label{fig:constant:dis}Simulation result for the constant disturbance $\Delta = (1,1,1)$; (a) Attitude tracking error, (b) Velocity tracking error, (c) Disturbance tracking error.}
 \end{figure}

In the second case study, we consider the time-varying disturbance vector
\begin{equation}\label{sinu:dis}
\Delta = \cos(0.5 t) \times (1,1,1),
\end{equation}
which violates our assumption that disturbance is constant. We choose the gain  $k_\Delta = 1000$ for \eqref{delta:dot} and report the results in Fig.\ref{fig:sinu:dis}. The tracking error does not go to zero and that is not a surprise since the control law \eqref{control:u:quat:Delta} and \eqref{delta:dot} is designed for constant disturbances. However, the tracking performance shown in Fig.\ref{fig:sinu:dis} is acceptable for all practical purposes. 
\begin{figure}[htb]
\centering
         \includegraphics[height=1.5in]{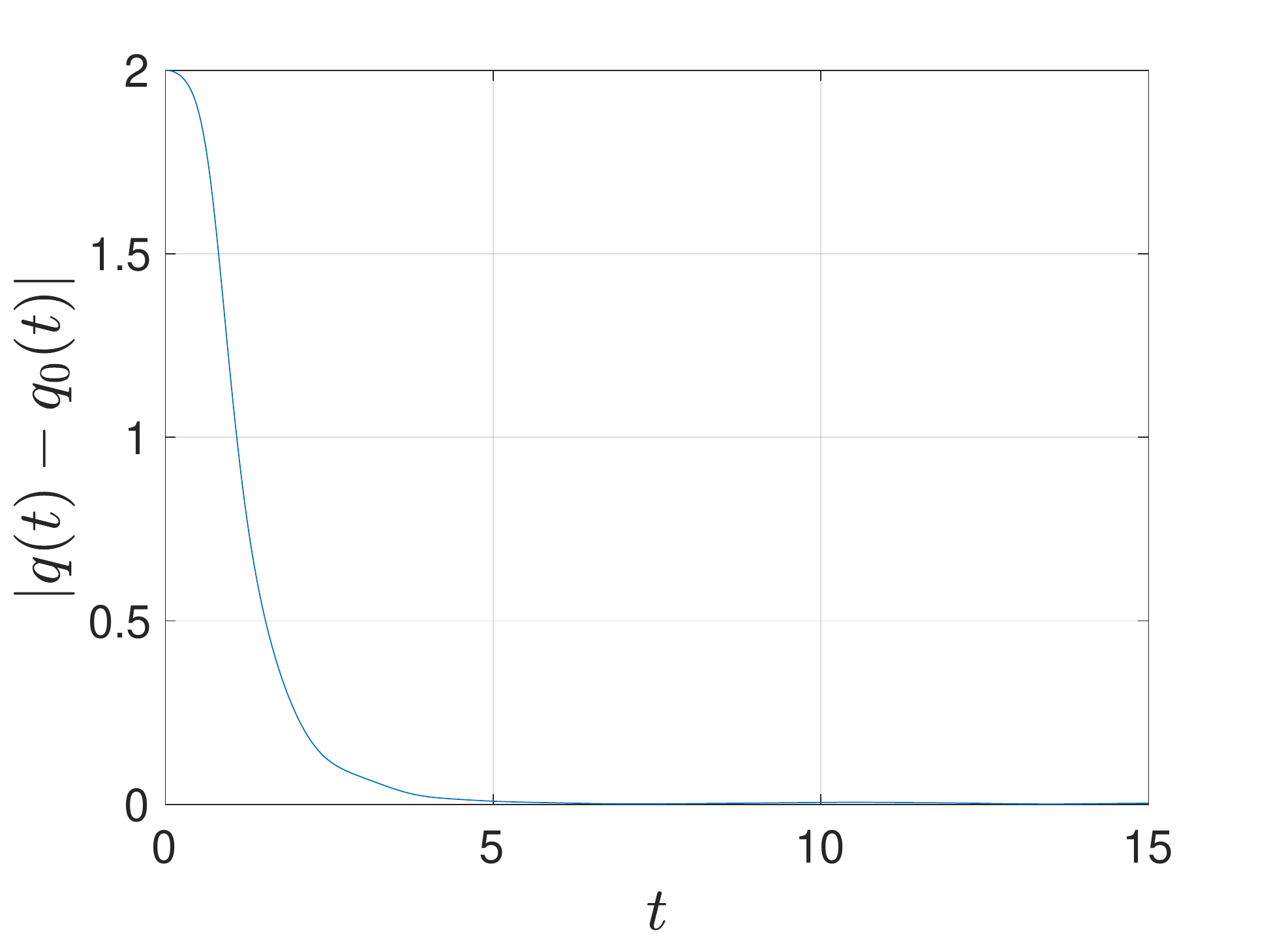}
          \includegraphics[height=1.5in]{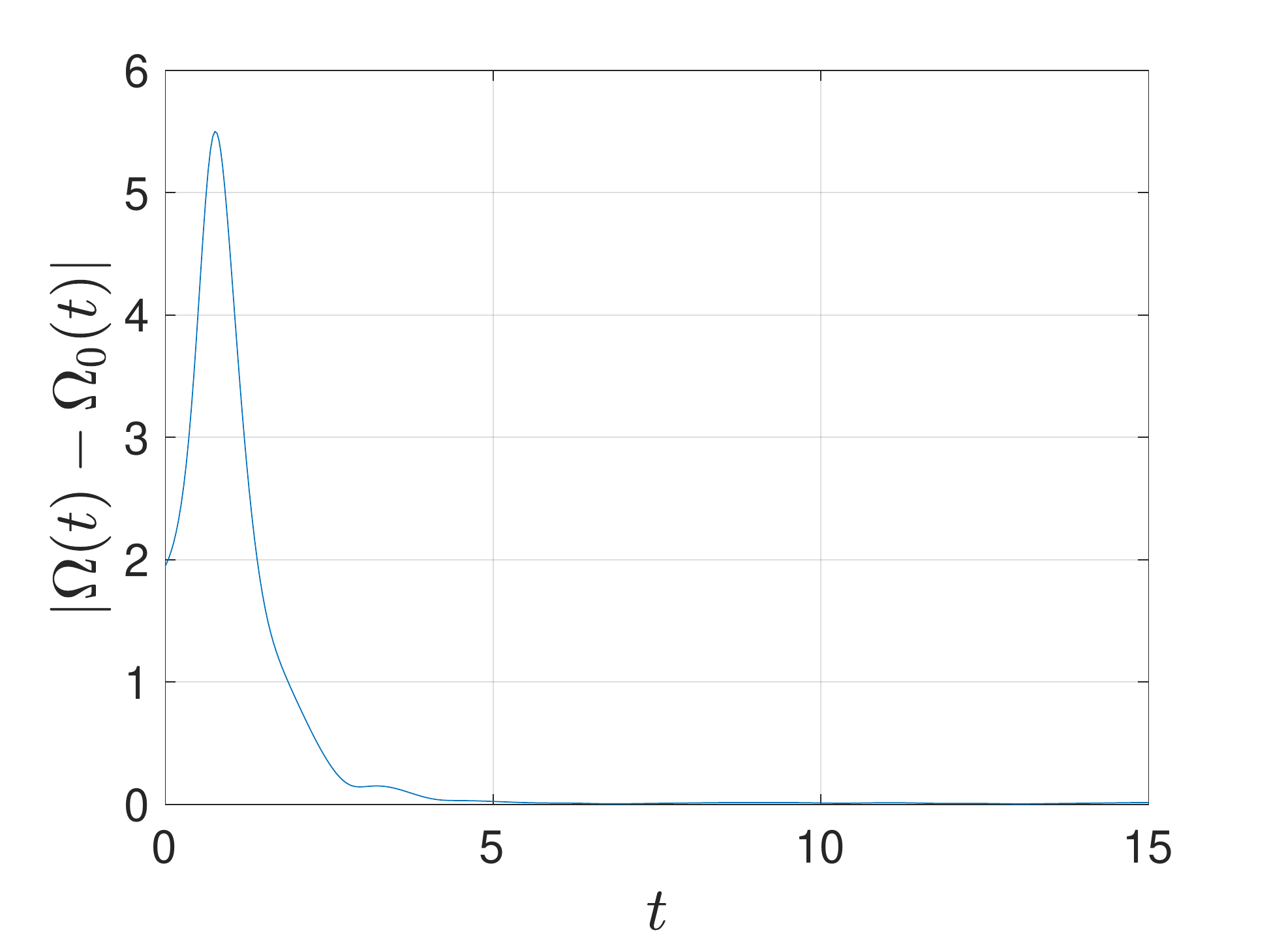}
         \includegraphics[height=1.5in]{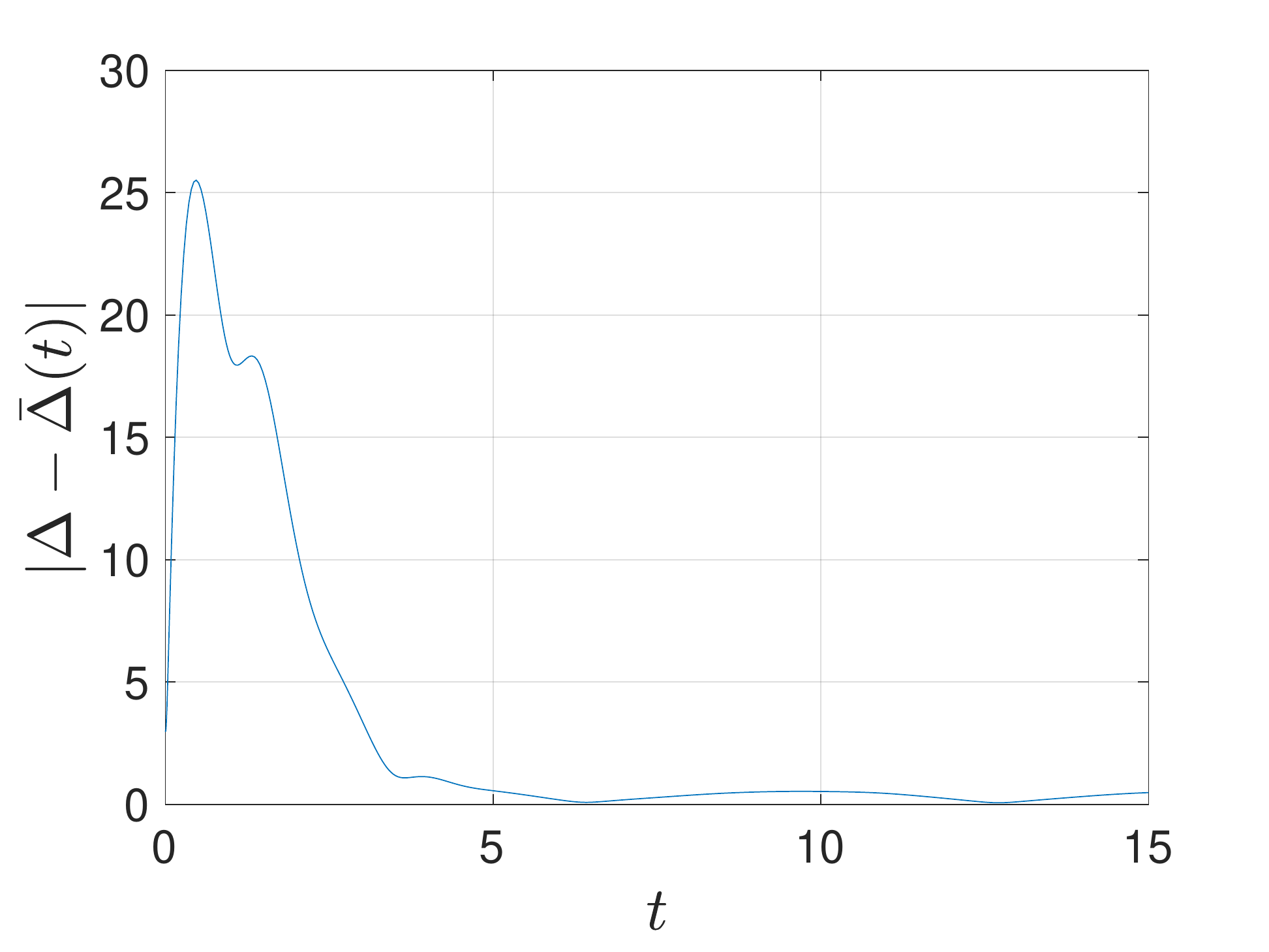}

      \caption{\label{fig:sinu:dis}Simulation result for the constant disturbance $\Delta = \cos(0.5t)\times(1,1,1)$ ; (a) Attitude tracking error, (b) Velocity tracking error, (c) Disturbance tracking error.}
 \end{figure}

In the third case study, we carry out simulation with the non-robust control law \eqref{control:u:quat} while keeping other settings unchanged for the disturbance \eqref{sinu:dis} and compare its performance with the robust control law. The velocity tracking errors for both control law are shown in Fig.\ref{fig:comparison:vel}. It is evident from Fig.\ref{fig:comparison:vel} that the tracking performance of the robust tracking control law is much better than that of the non-robust control law. 

 \begin{figure}[htb]
 \centering
 \includegraphics[height=1.8in]{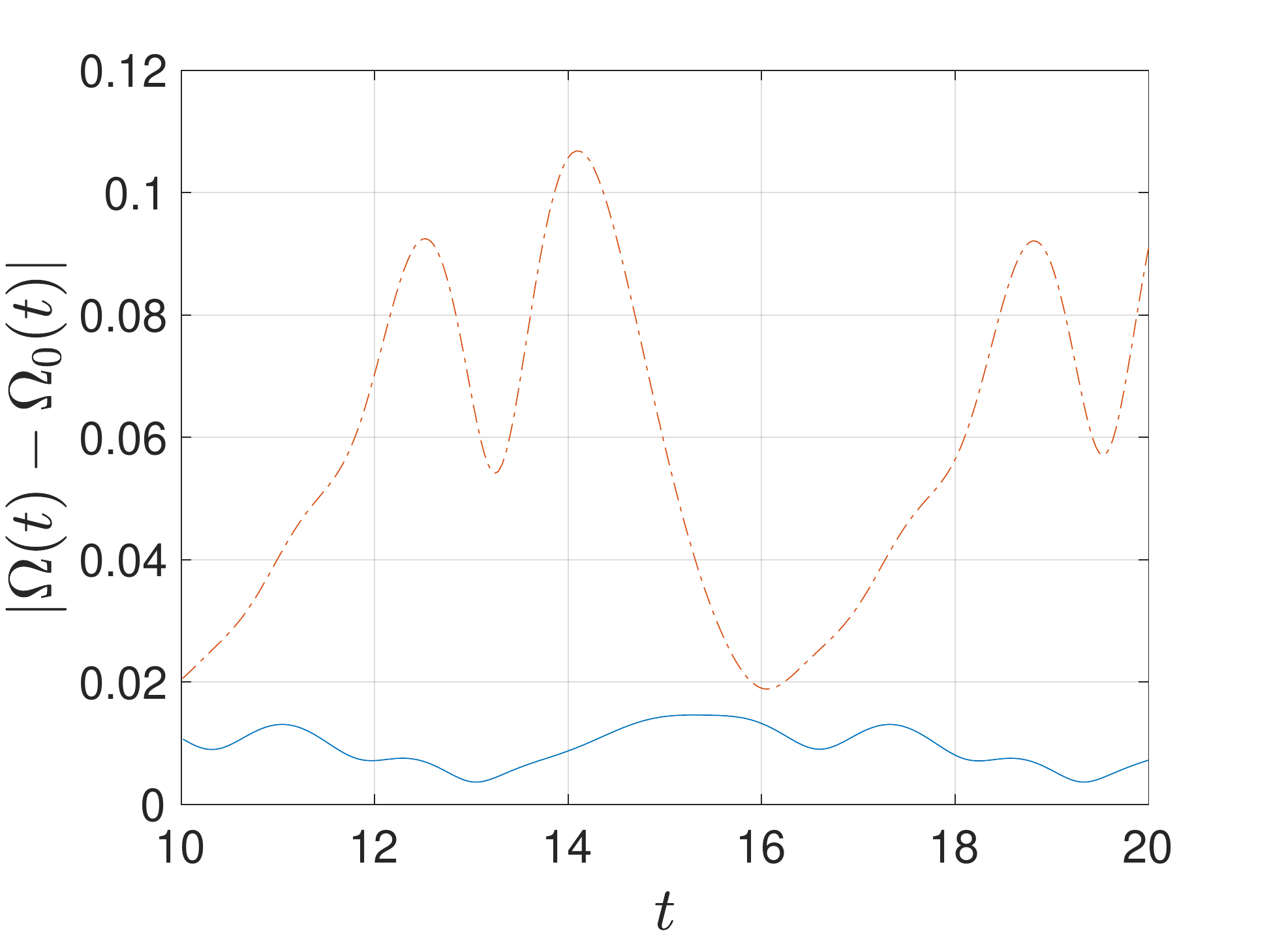}
 \caption{\label{fig:comparison:vel}Velocity tracking errors in presence of a sinusoidal disturbance with the robust tracking control law \eqref{control:u:quat:Delta} and \eqref{delta:dot} (blue solid) and the non-robust tracking control law \eqref{sinu:dis} (red dash-dot).}
 \end{figure}
 %
 
 \section{Conclusion}\label{sec:conclusion}
In this article, we proposed robust tracking control law for the attitude dynamics. The attitude dynamics is represented in quaternions that is stably embedded in Euclidean space and the tracking controller is then designed in that Euclidean space. The proposed control law enables asymptotic tracking of the reference by the system dynamics. In future, the proposed technique will be developed for more general class of systems and disturbances.   In particular, we would like to generalize the proposed technique for angular velocity and torque estimation of the rigid body \cite{MaPe17}.


\begin{thebibliography}{}

%


\bibitem{ChJiPe16}
 Chang, D.E.,   Jim\'enez, F., \&  Perlmutter, M. (2016). Feedback integrators.
  \emph{J. Nonlinear Sci.}, 26(6), 1693--1721.
  
 
\bibitem{Ch18IJRNC}
Chang, D.E. (2018). On controller design for systems on manifolds in {E}uclidean
  space. \emph{International Journal of Robust and Nonlinear Control},
  28(16),  4981--4998.


\bibitem{JoKeWe95} Joshi, S.M.,  Kelkar,  A.G., \&   Wen, J.T.-Y. (1995). Robust attitude stabilization of spacecraft using nonlinear quaternion feedback.  \emph{IEEE Trans. Automatic Control,}  40(10), 1800--1803.


\bibitem{Kh02}   Khalil, H.K. (2002). \emph{ Nonlinear Systems}, 3rd Ed., Prentice Hall, Upper Saddle River, NJ.

\bibitem{KiPhCh19}  Kim, M.,  Phogat, K.S. \& Chang, D.E. (2019). Optimal feedback stabilization of systems on manifolds.  \emph{Proc. 2019 IEEE 58th Conference on Decision and Control}.  Nice, France.

\bibitem{Le12} Lee, T. (2012). Exponential stability of an attitude tracking control system on SO(3) for large-angle rotational maneuvers.  \emph{Systems \& Control Letters}, 61(1), 231--237.

\bibitem{LeChEu19} Lee, T.,  Chang, D.E., \&  Eun, Y. (2019). Semiglobal nonmemoryless attitude controls on the special orthogonal group. \emph{ASME Journal of Dynamic Systems, Measurement, and Control}, 141, 021005.



\bibitem{Le15} Lee, T. (2015). Global exponential attitude tracking controls on SO(3). \emph{IEEE
Transactions on Automatic Control},  60 (10),  2837 -- 2842.



  \bibitem{LoAs04} Lovera, M. \&  Astolfi, A. (2004). Spacecraft attitude
control using magnetic actuators. {\it Automatica}, 40(8), 1405--1414.

  \bibitem{MaPe17} Magnis, L. \& Petit, M. (2017). Angular velocity and torque estimation from vector measurements.
{\it IFAC-PapersOnLine}, 50(1),
9000--9007.  

\bibitem{MaSaTe11}  Mayhew, C.G.,  Sanfelice, R.G., \&  Teel, A.R. (2012). Quaternion-based hybrid control for robust global attitude tracking. \emph{IEEE Transactions on Automatic Control}, 56 (11),  2146--2152.


%
\bibitem{XiSu18} Xia,  Y.  \&  Su, Y. (2018). Saturated output feedback control for global asymptotic attitude tracking of spacecraft.  \emph{Journal of Guidance, Control, and Dynamics}, 41 (10), 2298 -- 2305.
\end{thebibliography}
\end{document}